\documentclass[12pt]{article}
\usepackage{amsmath}
\usepackage{amssymb}
\usepackage{amsthm}
\usepackage{graphicx} 
\usepackage[legalpaper, margin=1in]{geometry}
\usepackage{enumitem}

\newtheorem{theorem}{Theorem} 
\newtheorem{corollary}[theorem]{Corollary}
\newtheorem{lemma}[theorem]{Lemma}
\newtheorem{proposition}[theorem]{Proposition}

\theoremstyle{definition}
\newtheorem{definition}[theorem]{Definition}
\newtheorem{question}[theorem]{Question}
\newtheorem{remark}[theorem]{Remark}

\theoremstyle{remark}

\newcommand{\cantor}{\mathbf C}

\title{The converse to Borsuk’s result on fans fails} 
\author{
Benjamin Vejnar\footnote{https://orcid.org/0000-0002-2833-5385} \footnote{
Supported by the grant GACR 24-10705S
}\\
Department of Mathematical Analysis\\ 
Faculty of Mathematics and Physics, Charles University\\
Prague, Czechia\\
E-mail: vejnar@karlin.mff.cuni.cz}

\begin{document}
\maketitle

\renewcommand{\thefootnote}{}

\footnote{2020 \emph{Mathematics Subject Classification}: Primary: 54F50, Secondary: 54G20, 54F55.}  

\footnote{\emph{Key words and phrases}: Continuum; fan; arc; topological dimension.}

\begin{abstract}
A fan is an arc-wise connected hereditarily unicoherent continuum with exactly one branching point.
By a result of Borsuk, every fan is a 1-dimensional continuum that can be expressed as the union of a family of arcs, each pair of which intersects in the branching point.
In this paper, we prove that the converse does not hold by providing a more general result.
\end{abstract}

\section{Introduction}
A dendroid is an arc-wise connected hereditarily unicoherent continuum, and a fan is a dendroid with exactly one branching point.
In 1954, while proving the fixed point property for dendroids, Borsuk proved that every dendroid is hereditarily decomposable \cite{Borsuk}. Consequently, the topological dimension of any dendroid is one (see e.g. \cite{CharatonikDednroids}). In the same paper, Borsuk proved that the closure of any injective continuous image of a ray in any dendroid is an arc. It follows that every fan can be expressed as the union of arcs, each pair of which intersects in a distinguished point.
In 1963, Borsuk defined a \textbf{broom} to be a compact 1-dimensional space $X$ that can be expressed as the union of a family $\mathcal L$ consisting of arcs, such that any two distinct arcs $K, L\in\mathcal L$ intersect only in a point $t\in X$ \cite{BorsukBrooms}.
While proving that there is a countable broom (i.e., $\mathcal L$ is countable) which cannot be embedded into the plane, he noted that a countable broom is always a fan.
Nothing is mentioned about the case when $\mathcal L$ is uncountable. 
It is natural to ask:

\begin{question}\label{questionmain}
Is every broom a fan?
\end{question}

This question was explicitly stated in \cite[Problem 3.4]{phans}, where the authors proved that a broom with some additional properties is a fan. The question is also rephrased in \cite{Lipham}.
The aim of this paper is to provide a negative answer to Question \ref{questionmain} by proving a more general result:

\begin{theorem}\label{mainthm}
Let $C$ be the Cantor set and $T$ be a continuum for which there is a continuous surjective map $f:C\to T$ where $|f^{-1}(t)|\leq2$, $t\in T$, and $|f^{-1}(t)|=2$ only for countably many points $t\in T$.

Then, there exists a 1-dimensional continuum $X$ with a point $t\in X$ 
and a family $\mathcal L$ consisting of arcs in $X$ such that 
\begin{itemize}[noitemsep]
 \item $K\cap L=\{t\}$ for every pair of distinct arcs $K,L\in\mathcal L$,
 \item $t$ is an end-point of every $L\in\mathcal L$,
 \item $X$ contains a homeomorphic copy of $T$.
 \item $X$ contains a homeomorphic copy of the circle,
 \item $|\mathcal L|\geq 3$.
\end{itemize}
\end{theorem}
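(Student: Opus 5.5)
The plan is to build $X$ as a quotient of the Cantor fan in which the tops of the spokes are glued together along $f$. Let
\[
Y=(C\times[0,1])/(C\times\{0\}),
\]
and write $t$ for the image of $C\times\{0\}$. Define $X$ as the quotient of $Y$ by the equivalence relation generated by
\[
(c,1)\sim(c',1)\quad\text{whenever } f(c)=f(c').
\]
The relation is closed, since it is the pullback of the diagonal of $T$ under the closed map $f\times f$, so $X$ is a compact metric space. It is connected: it is a union of arcs through $t$ together with the connected top part, which meets them. Hence $X$ is a continuum.

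The map $c\mapsto[(c,1)]$ induces a continuous bijection from $T=C/f$ onto the top level of $X$. Since $T$ is compact and $X$ is Hausdorff, this bijection is an embedding, so $X$ contains a homeomorphic copy of $T$.

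Next I would choose the family $\mathcal L$. Let $P=\{t\in T:|f^{-1}(t)|=2\}$, which is countable. For each $p\in P$, discard one of its two preimages, and let $A\subseteq C$ be the set of remaining points. Set
\[
\mathcal L=\{L_c:c\in A\},\qquad L_c=\text{image of }\{c\}\times[0,1].
\]
Each $L_c$ is an arc: the quotient map is injective on $\{c\}\times(0,1]$, because at height $1$ the only identification with $c$ involves its discarded partner. The arc $L_c$ has $t$ as an end-point. For distinct $c,c'\in A$ we have $f(c)\neq f(c')$ by the choice of $A$, so $L_c\cap L_{c'}=\{t\}$. Since $C\setminus A$ is countable, $A$ is uncountable, which gives $|\mathcal L|\geq3$.

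For the circle, note that $P\neq\emptyset$: otherwise $f$ would be a homeomorphism of $C$ onto the connected space $T$, which is impossible. Take $p\in P$ with $f^{-1}(p)=\{c,c'\}$. The images of the two spokes $\{c\}\times[0,1]$ and $\{c'\}\times[0,1]$ are arcs that meet exactly in $t$ and in $[(c,1)]$. Their union is therefore a simple closed curve in $X$.

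For dimension, $X$ is the union of three pieces:
\begin{itemize}[noitemsep]
 \item the point $t$;
 \item the closed copy of $T$;
 \item the countably many closed sets $X_n$, the images of $C\times[1/n,1-1/n]$, each homeomorphic to $C\times[1/n,1-1/n]$ and hence of dimension at most $1$.
\end{itemize}
The copy of $T$ has dimension at most $1$ because $f$ is a closed map with fibres of cardinality at most $2$. By Hurewicz's theorem on dimension-raising maps,
\[
\dim T\le\dim C+1=1.
\]
The countable closed sum theorem then gives $\dim X\le1$, and $X$ contains arcs, so $\dim X=1$.

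The step I expect to be the main obstacle is not any of the bullet points as literally stated, but the stronger requirement needed for a counterexample to Question \ref{questionmain}, namely that $X=\bigcup\mathcal L$. With the naive choice above, the discarded spokes are left uncovered. They cannot simply be added, since two glued spokes form a circle through $t$, and such a circle cannot be covered by arcs from $t$ that meet pairwise only in $t$. If that covering property is intended, I would first try to modify the gluing for the countably many pairs. The idea is to attach the top of each discarded spoke to the paired spoke below height $1$, at heights tending to $0$ in a controlled way. The discarded spoke then extends its partner's arc without closing a loop at $t$, while the top level still contains a copy of $T$ and a circle appears elsewhere. Checking compactness and the arc property of the modified members is where I expect the real work to lie.
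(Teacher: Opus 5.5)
\textbf{Verdict: genuine gap.} You prove only the bullets as literally stated; the covering property $X=\bigcup\mathcal L$, which is the actual content of the paper's proof, is left open.

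For the five bullets as written, your argument is correct. Your $X$ is exactly the paper's intermediate continuum $Y$, with heights $0$ and $1$ swapped. The embedding of $T$, the circle over a point of $P$, and $\dim X=1$ (via the Hurewicz bound $\dim T\le 1$ plus the countable sum theorem) all go through, and the paper uses the same bound for $T$. But the literal statement is nearly trivial once $\dim T\le 1$ is known: a one-point union of $T$, a circle and three arcs already satisfies it.

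The theorem is meant to produce a broom, so it also needs $X=\bigcup\mathcal L$. The paper proves this in the paragraph ``The space $X$ is covered by $\mathcal L$'' and relies on it for the Corollary and for Question~\ref{questionmain}. As you note yourself, your family leaves the discarded spokes uncovered, and this is precisely what the paper's construction exists to handle.

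Your sketched repair does not work as described.
\begin{itemize}[noitemsep]
\item Gluing $(c_k',1)$ to $(c_k,h_k)$ with $h_k\to 0$ is not upper semicontinuous. Infinitely many points $(c_k',1)$ accumulate at the top level ($P$ is necessarily infinite), while their partners tend to $t$. So any Hausdorff quotient collapses those accumulation points to $t$.
\item The top level no longer carries $T$, since $(c_k',1)$ is no longer identified with $(c_k,1)$.
\item The spoke of $c_k$ up to height $h_k$, together with the spoke of $c_k'$, is still a loop through $t$.
\end{itemize}
The premise behind the repair is also false. A loop through $t$ is no obstruction; it only cannot be covered by arcs lying inside it. In the paper, the circle $q(I_{a_n})\cup q(I_{b_n})$ passes through $t$ and is covered.

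The paper keeps the gluing and takes the arc $q(I_{a_n}\cup I_{b_n}[0,r_n])$ with $r_n\to 1$. This leaves only a tail $q(I_{b_n}[r_n,1])$, and these tails shrink to $t$. The missing idea is how to cover such tails by arcs that meet them in a single point besides $t$ and then leave the circle.
\begin{itemize}[noitemsep]
\item This is done by the auxiliary continuum $Z$: identify the base arc $B$ of $Y$ with the spoke $q(I_0)$ via $q(c,0)\mapsto q(0,f(c))$. Then every interior point of $A=p(B)$ lies on its own arc of $\mathcal L_s$, which meets $A$ only there and at $t$.
\item A copy $Z_n$ is glued along each tail.
\item Each $Z_s$ has its own uncovered tails $p_sq(I_{b_n}[r_n,1])$ and $p_sq(I_1[1/2,1])$; the spoke $I_1$ closes into a loop at $t$ in $Z$. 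So the gluing must be iterated along $\Lambda=\mathbb N\times\omega^{<\omega}$, with the $Z_s$ forming a null sequence at $t$ so that the quotient is a continuum.
\end{itemize}
With this, every point of $X\setminus\{t\}$ lies in a finite stage and is covered there or at the next stage. Arcs from adjacent stages meet only in $t$, because the unique arc of $\mathcal L_s$ meeting $A_{s\smallfrown n}$ away from $t$ does so at the endpoint not used by $\mathcal L_{s\smallfrown n}$. The countable sum theorem still gives $\dim X=1$.
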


Theorem \ref{mainthm} can be applied, e.g., to $T=[0,1]$ and $f:C\to T$ the standard Cantor map. We get a broom $X$ that contains a simple closed curve (in fact, many of them); hence, $X$ cannot be a fan. Thus, we get the following corollary.

\begin{corollary}
There exists a broom that is not a fan.
\end{corollary}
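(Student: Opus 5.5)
The idea is to build $X$ as a quotient of the cone over the Cantor set. Let $Y = (C\times[0,1])/(C\times\{0\})$ be the Cantor fan, with vertex $t$ the image of $C\times\{0\}$. In $Y$ the arcs $L_c=\{c\}\times[0,1]$, $c\in C$, meet pairwise exactly in $t$. To make $T$ appear, glue the top level $C\times\{1\}$ according to $f$: identify $(c,1)\sim(c',1)$ whenever $f(c)=f(c')$. The top level then becomes a copy of $T$. The gluing is closed, since $\{(c,c'):f(c)=f(c')\}$ is closed, so the quotient $Z$ is a compact metric continuum. Each point of $T$ has at most two preimages, so the identifications are at most two-to-one.

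The trouble is that every arc $L_c$ with $f^{-1}(f(c))=\{c,c'\}$ now meets $L_{c'}$ at a second point. We deal with this using the hypothesis that only countably many points are doubled. Let $D=\{c\in C: |f^{-1}(f(c))|=2\}$; it is countable. For $c\in C\setminus D$, the image of $L_c$ in $Z$ is still an arc, and it meets the others only in $t$. For $c\in D$, pair $c$ with its partner $c'$. The union $L_c\cup L_{c'}$ becomes a simple closed curve through $t$ and through the glued point $p=f(c)\in T$; this gives the required circle.

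To obtain the family $\mathcal L$, keep the arcs $L_c$ for $c\notin D$. For each pair $\{c,c'\}\subset D$, replace the two arcs by a single arc from $t$: follow $L_c$ up to $p$, then descend $L_{c'}$ to a point just above $t$, stopping short of $t$ so that the endpoint $t$ is not hit twice. The descending part must stop before $t$, so points of $L_{c'}$ near $t$ would be left uncovered. Two repairs are possible. One is to include these points in other arcs, sending each leftover initial segment upward from $t$. The other, which I prefer, is to cover $X$ only by the arcs $L_c$, $c\notin D$, together with some extra arcs, and to check that $X$ equals their union.

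A cleaner approach is to choose $X$ smaller, as the closure of the union of the $L_c$ with $c\notin D$. Since $D$ is countable, $C\setminus D$ is dense in $C$ whenever $C$ has no isolated points; hence this closure is all of $Z$, and each $L_c$ with $c\in D$ is a limit of good arcs. But the broom condition requires $X$ itself to be the union of the arcs, not merely their closure. So the real fix is to use the freedom to modify $C$: replace $C$ by a homeomorphic Cantor set and the map $f$ by a composition, so that each doubled pair $c,c'$ is handled by adding a new arc. For instance, attach to $t$ a new arc $A_c$ whose other end is $p$, and glue the rest of $L_{c'}$ along it.

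The main obstacle is exactly this bookkeeping: covering the countably many circles by arcs that are pairwise disjoint away from $t$, while keeping the space compact. I would handle it by forming, for the $n$-th doubled pair, a figure whose arcs get the rescaled heights $2^{-n}$. The pieces then shrink to $t$, which keeps compactness. One-dimensionality follows because $X$ is a countable union of closed sets, namely $T$ (which is $1$-dimensional or $0$-dimensional as the image of a finite-to-one map) and the cone pieces $\{c\}\times[\epsilon,1]$ over a $0$-dimensional set, each of dimension at most $1$; the countable sum theorem then applies. Finally, $|\mathcal L|\ge3$ holds because $C$ is uncountable.
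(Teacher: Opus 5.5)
\textbf{There is a genuine gap: you never construct the family $\mathcal L$, and that construction is the entire content of the result.} Your first step matches the paper: the Cantor fan with the level opposite $t$ glued according to $f$ is exactly the paper's continuum $Y$. You also correctly isolate the difficulty, namely the countably many circles $S=L_c\cup L_{c'}$. But neither of your concrete repairs can work, for the following reason.

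\begin{itemize}[noitemsep]
\item Arcs of $\mathcal L$ lying inside $S$ cannot cover it. Each such arc meets the open arc $S\setminus\{t\}\cong\mathbb R$ in a closed ray. These rays are pairwise disjoint, and two rays pointing the same way intersect, so there are at most two. Since $\mathbb R$ is connected, at most two disjoint closed rays never cover it.
\item Hence your first repair fails. The leftover piece of $L_{c'}$ is a half-open arc $[t,x_0)$, not an arc, so ``sending it upward from $t$'' does not produce a member of $\mathcal L$.
\item Your second repair, attaching a new arc $A_c$ from $t$ to $p$ and gluing the rest of $L_{c'}$ along it, only moves the uncovered half-open piece onto new material. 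That material must itself be covered, and it forms a new circle with $L_c$.
\end{itemize}

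What is actually needed is that the points of each leftover segment be covered by arcs that leave $S$, each meeting the segment in a single point. Your ``figure whose arcs get the rescaled heights $2^{-n}$'' is precisely the object you would have to specify, and you do not. Your null-sequence idea for compactness is the right one, but it cannot substitute for this.

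In the paper this is done as follows.
\begin{itemize}[noitemsep]
\item A cone over the segment would be $2$-dimensional, so the paper uses instead the continuum $Z$. This is the same glued fan $Y$, with its base arc $B$ additionally identified with $q(I_0)$ via $q(c,0)\mapsto q(0,f(c))$.
\item Then $A=p(B)$ is an arc ending at $t$. Each of its non-end points lies on exactly one of the arcs $pq(I_c)$, resp.\ $pq(I_{a_n}\cup I_{b_n}[0,r_n])$, and each such arc meets $A$ only there and at $t$.
\item Each leftover segment $q(I_{b_n}[r_n,1])$ is identified with a copy $A_n\subseteq Z_n$. The already used point $q(b_n,r_n)$ goes to the endpoint $p_nq(0,0)$ of $A_n$, which no arc of $\mathcal L_n$ contains.
\item The copy $Z_n$ again contains doubled pairs, because a Cantor set cannot be mapped injectively onto an arc. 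It also contains the circle $p_nq(I_1)$. Both produce new leftover segments.
\item So the gluing must be iterated along the countably branching tree $\Lambda$. All copies $Z_s$ form a null sequence converging to $t$, which keeps $X$ compact, and the countable sum theorem gives $\dim X=1$.
\end{itemize}

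Without this infinite regress, or some other explicit mechanism achieving the same covering, your argument does not show that the required family $\mathcal L$ exists.
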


Using Remark \ref{remarkmore}, we can apply Theorem \ref{mainthm} to the Sierpiński triangle $T$ to obtain a broom that contains a homeomorphic copy of $T$, thus being very different from a fan.
Our result also answers \cite[Problem 4.17]{phans} in the negative. See Theorem \ref{theoremanotherproblem} for the details.

\section{Preliminaries}

We denote $\omega=\{0, 1,\dots\}$ and $\mathbb N=\{1, 2,\dots\}$.
We use indices $s\in \omega^{<\omega}$, i.e. $s=(s_0,\dots, s_{n-1})$ is a finite sequence of non-negative integers. We denote by $|s|$ the length of $s$. Note that $s$ can also be an empty sequence. Also, if $|s|=1$ we identify $s$ with $s_0$.

A continuum is a nonempty, compact, connected, metrizable topological space. A simple closed curve is any space homeomorphic to the unit circle, and an arc is any space homeomorphic with $[0,1]$. A continuum is hereditarily unicoherent, provided that the intersection of two subcontinua is either empty or a continuum.
A dendroid is an arc-wise connected, hereditarily unicoherent continuum. A simple triod with top point $t$ is a continuum that can be expressed as a union of three arcs, each pair of which intersects exactly in $\{t\}$ and $t$ is an end-point of each of these arcs.  We use \cite{Nadler} as a standard reference for notions in Continuum theory.

A topological space is 0-dimensional if it has a base formed by sets that are open and closed at the same time. A space is 1-dimensional if it is not 0-dimensional and has an open base formed by sets with a 0-dimensional boundary. See \cite{EngelkingDim} for more on Dimension theory in topology.

Every dendroid is a 1-dimensional continuum. A point in a dendroid is called a branching point if it is the top of a simple triod contained in it (see \cite{CharatonikDednroids}). A fan is a dendroid with exactly one branching point.
For an exposition on fans, see the paper by J. J. Charatonik \cite{CharatonikOnFans}.
We use the terminology of Borsuk for the notion of a broom:

\begin{definition}
A 1-dimensional continuum $X$ is called a broom if there is a point $t\in X$ and a family $\mathcal L$ consisting of arcs in $X$ such that 
\begin{itemize}[noitemsep]
\item $|\mathcal L|\geq 3$,
\item $X=\bigcup\mathcal L$,
\item $K\cap L=\{t\}$, for every pair of distinct arcs $K, L\in \mathcal L$.
\item the point $t$ is an end-point of every $L\in\mathcal L$.
\end{itemize}
\end{definition}
It can be easily seen that if we drop the last condition, we get an equivalent notion.
The reason to include it is that in the last section we state a question about the uniqueness of the family $\mathcal L$ in a broom.
Let us note that the name “phan" is used in \cite{phans} instead of a broom.

The following result is usually referred to as the Countable sum theorem for topological dimension \cite[Theorem 1.5.3]{EngelkingDim}. We will be using it only in the case $n=1$.


\begin{lemma}\label{countablesumtheorem}
The countable union of at most $n$-dimensional closed sets is at most $n$-dimensional.
\end{lemma}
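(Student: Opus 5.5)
We plan to follow the classical route for separable metrizable spaces. This suffices here, since all spaces in the paper are subspaces of continua. Throughout, dimension means the small inductive dimension $\operatorname{ind}$, which agrees with the definition given in the Preliminaries. The argument proceeds by induction on $n$. The one external tool we need, used twice, is the standard \emph{separation lemma}: in a separable metrizable $0$-dimensional space, any two disjoint closed sets can be separated by a clopen set.

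\textbf{Step 1: the case $n=0$.} Let $X=\bigcup_{i\in\mathbb N}F_i$ with each $F_i$ closed and $0$-dimensional. Let $A,B$ be disjoint closed subsets of $X$. We build, by recursion, open sets $U_0\subseteq U_1\subseteq\cdots$ and $V_0\subseteq V_1\subseteq\cdots$ with $A\subseteq U_0$, $B\subseteq V_0$, $\overline{U_i}\cap\overline{V_i}=\emptyset$, and $F_i\subseteq U_i\cup V_i$. The recursion step is as follows.
\begin{itemize}[noitemsep]
\item The sets $\overline{U_{i-1}}\cap F_i$ and $\overline{V_{i-1}}\cap F_i$ are disjoint closed subsets of $F_i$.
\item By the separation lemma in $F_i$, split $F_i$ into two disjoint clopen (in $F_i$) pieces $P\supseteq \overline{U_{i-1}}\cap F_i$ and $Q\supseteq\overline{V_{i-1}}\cap F_i$.
\item The sets $\overline{U_{i-1}}\cup P$ and $\overline{V_{i-1}}\cup Q$ are disjoint and closed in $X$. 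By normality, enlarge them to open sets $U_i,V_i$ with disjoint closures.
\end{itemize}
Then $U=\bigcup U_i$ and $V=\bigcup V_i$ are disjoint open sets covering $X$, so $U$ is a clopen set separating $A$ from $B$. Applying this with $A$ a point and $B$ the complement of a basic open neighbourhood yields a clopen base. Hence $\operatorname{ind}X\le 0$.

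\textbf{Step 2: reduction for $n\ge1$.} This is the only case used in the paper. Let $X=\bigcup F_i$ with $\operatorname{ind}F_i\le n$. Each $F_i$ has a countable base $\mathcal B_i$ (open in $F_i$) whose members have boundaries of dimension $\le n-1$. Let $Z$ be the union of all these boundaries; these are countably many sets, each closed in $F_i$ and hence in $X$.
\begin{itemize}[noitemsep]
\item By the induction hypothesis, $\operatorname{ind}Z\le n-1$.
\item The sets $F_i\setminus Z$ are closed in $X\setminus Z$ and $0$-dimensional, since the traces of $\mathcal B_i$ on them are clopen. So Step 1 gives $\operatorname{ind}(X\setminus Z)\le 0$.
\end{itemize}
For $n=1$, this says that $Z$ is $0$-dimensional.

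\textbf{Step 3: the addition step, which is the main obstacle.} We must show that a separable metrizable $X=Y\cup Z$ with $\operatorname{ind}Y\le0$ and $\operatorname{ind}Z\le n-1$ satisfies $\operatorname{ind}X\le n$. The plan is to use the separation lemma in its strong form: for disjoint closed $A,B\subseteq X$ and a $0$-dimensional subspace $Y$, there is a partition $L$ between $A$ and $B$ in $X$ with $L\cap Y=\emptyset$. Its proof again goes through the construction of Step 1, combined with the fact that closed sets in $Y$ extend to $G_\delta$ sets in $X$. Such an $L$ then lies inside $Z$, so $\operatorname{ind}L\le n-1$.

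Applying this to a point and the complement of a basic open neighbourhood produces neighbourhoods whose boundaries have dimension $\le n-1$, giving $\operatorname{ind}X\le n$. Finally, $X$ is not empty-dimensional beyond what the $F_i$ force, so we get exactly ``at most $n$''. The delicate part is the partition lemma avoiding $Y$. If it becomes unwieldy, we would instead cite \cite[Theorem 1.5.3]{EngelkingDim} directly, as the paper does.
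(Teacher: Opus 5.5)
The paper does not prove this lemma. It quotes it from Theorem 1.5.3 of Engelking's \emph{Dimension Theory}. Your proposal is correct and is essentially the proof given in that book:
\begin{itemize}
\item the sum theorem for dimension $0$, via increasing clopen separations;
\item a countable base of each $F_i$ with boundaries of dimension $\le n-1$, which splits $X$ into an $(n-1)$-dimensional part $Z$ and a zero-dimensional part $X\setminus Z$;
\item the addition step, via a partition that misses the zero-dimensional part.
\end{itemize}
So your route makes the citation self-contained, modulo the two standard facts about zero-dimensional separable metrizable spaces that you name.

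The only loose spot is your justification of the partition lemma in Step 3. It needs neither the recursion of Step 1 nor any $G_\delta$ extension; a direct argument works.
\begin{itemize}
\item Take open sets $U\supseteq A$ and $V\supseteq B$ with $\overline U\cap\overline V=\emptyset$.
\item Use the separation lemma in $Y$ to write $Y=Y_1\cup Y_2$, with $Y_1,Y_2$ disjoint and clopen in $Y$, $\overline U\cap Y\subseteq Y_1$ and $\overline V\cap Y\subseteq Y_2$.
\item Then $M=A\cup Y_1$ and $N=B\cup Y_2$ are separated in $X$. For instance, $\overline{Y_1}\cap V=\emptyset$ because $Y_1\cap V=\emptyset$ and $V$ is open, and $\overline{Y_1}\cap Y_2=\emptyset$ because $Y_1$ is closed in $Y$.
\item With a compatible metric $d$, the sets $\{x: d(x,M)<d(x,N)\}$ and $\{x: d(x,N)<d(x,M)\}$ are disjoint open sets containing $M$ and $N$.
\item The complement of their union is a partition $L$ between $A$ and $B$ with $L\cap Y=\emptyset$.
\end{itemize}

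Your closing sentence, that $X$ is not ``empty-dimensional beyond what the $F_i$ force'', has no content and can be dropped. The lemma only claims an upper bound.
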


\begin{figure}
\centering
\includegraphics[width=\linewidth]{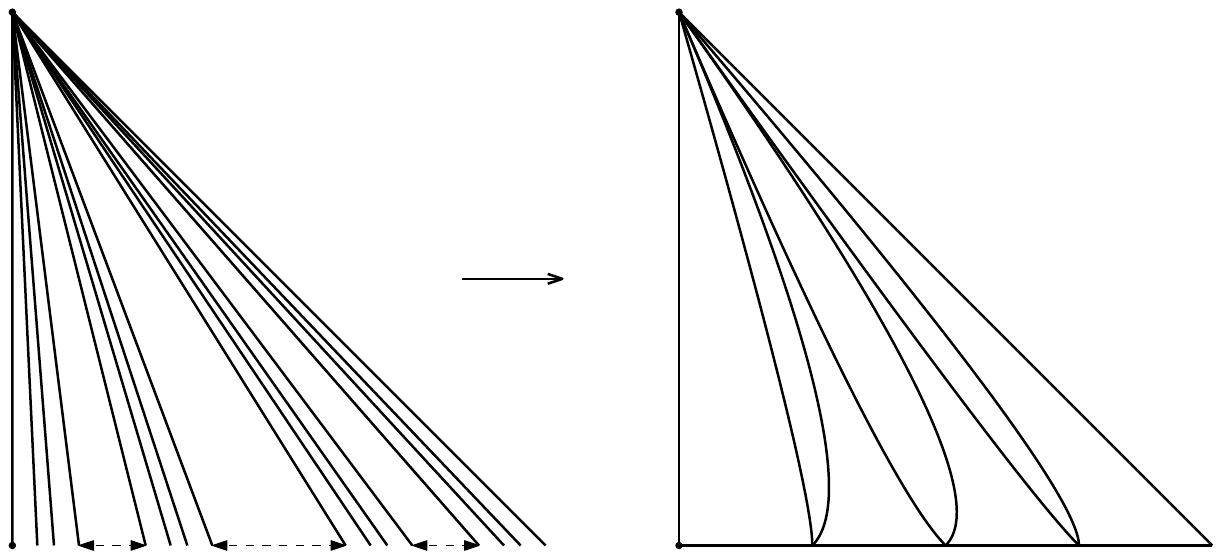} %
\caption{\label{FigY} The quotient map $q: F\to Y$.}
\end{figure}

\section{Proof of Theorem \ref{mainthm}}\label{mainsection}

For the readers convenience, we describe the proof of Theorem \ref{mainthm} in the case $T=[0,1]$ and using a special choice of $f$. The proof of the full result is presented at the end of this section.

\subsubsection*{Notation}

Let $I=[0,1]$ and $C\subseteq I$ be the standard Cantor set. Denote by $f:C \to I$ the Cantor map, i.e. the map for which $f(\sum 2a_i /3^i)=\sum a_i/2^i$, where $a_i\in\{0,1\}$. This is at most a 2-to-1 map. Denote by $D$ the collection of all dyadic numbers in $(0, 1)$.  
The set $D$ is countable, so we can enumerate it as $\{d_n: n\in\mathbb N\}$.
Then $f^{-1}(d_n)$ is a two point set, and we fix $a_n, b_n\in C$ with $f^{-1}(d_n)=\{a_n, b_n\}$, $n\in\mathbb N$.
On the other hand, $f^{-1}(d)$ is a singleton for $d\in I\setminus D$.

\subsubsection*{The Cantor fan $F$}

Let $F$ be the cone over $C$, i.e., $F$ is the quotient of $C\times I$ where $C\times \{1\}$ is pushed to a point, which will be called $t$. The continuum $F$ is usually called a Cantor fan with top point $t$. 
For $c\in C$ and $a<b\in I$, we denote by $I_c[a,b]$ the set $\{c\}\times [a,b]$; analogously, we use $I_c[a,b)$ for intervals open from the right side, i.e., $\{c\}\times [a,b)$, etc.
We will also shorten $I_c=I_c[0,1]$ and identify these sets with their images under the quotient map $C\times I \to F$.

\subsubsection*{The continuum $Y$ (see Figure \ref{FigY})}

Now, consider the quotient $Y$ of $F$, where two points $(a,0), (b,0)\in F$ are identified if and only if $f(a)=f(b)$. Let $q: F\to Y$ be the quotient map. 
Also denote $B=q(C\times \{0\})$, which is an arc in $Y$. 

\subsubsection*{The continuum $Z$ as a quotient of $Y$ (see Figure \ref{FigZ})}

Define $Z$ as a quotient of $Y$ in the following way.
Identify the arc $B$ with the arc $q(I_0)$ by the homeomorphism that maps $q(c,0)$ to $q(0,f(c))$, $c\in C$. Let $p: Y\to Z$ be the quotient map. Denote $A=p(B)$.

\subsubsection*{The compact space $W$}

We define an indexing set 
\[\Lambda=\{s\in\omega^{<\omega}: s\neq\emptyset, s_0\neq 0\}=\mathbb N\times \omega^{<\omega}.\]
The form of this indexing set reflects the fact 
that once we produce our final space $X$ as a quotient of $W$, we will need $\mathbb N$ many gluings in the first steps, and in the additional steps, we will always need one more space corresponding to the index $s$ whose last entry is zero.

Let $Z_s=Z$ for $s\in \Lambda$.
Let $W$ be the disjoint union of $Y$ and all the $Z_s$, topologized in such a way that $Z_s$, $s\in \Lambda$, forms a null sequence converging to the point $t$.
In other words, the disjoint sum $\bigcup \{Z_s: s\in \Lambda\}$ (which is a non-compact, locally compact, separable space) is compactified by the one point $t\in Y$. 
Thus, $W$ is clearly a compact space.
As we want $Z_s$, $s\in\Lambda$, to be disjoint copies of $Z$ in $W$, we adopt the notation of $p_s: Y\to Z_s$, $t_s=p_s(q(t))$, $A_s=p_s(B)$ for the corresponding objects in $Z_s$, in order to distinguish them.

\begin{figure}
\centering
\includegraphics[width=\linewidth]{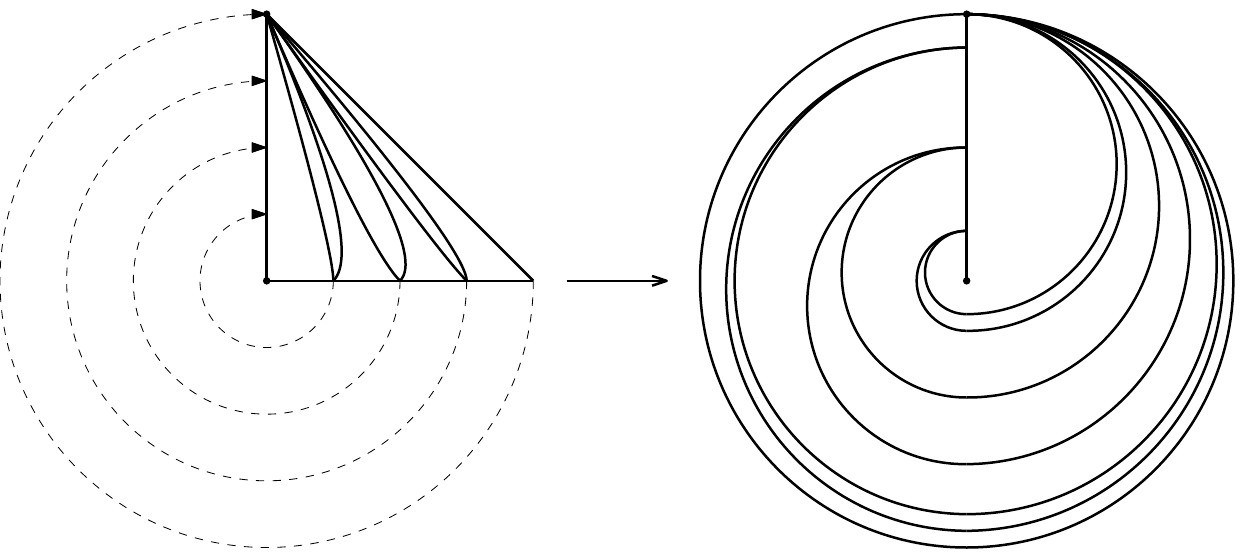} %
\caption{\label{FigZ} The quotient map $q: Y\to Z$.}
\end{figure}

\subsubsection*{The space $X$ as a quotient of $W$ (see Figure \ref{FigW})} 
Fix real numbers $r_n\in (0,1)$, $n\in\mathbb N,$ converging to 1.
First, identify in $W$ every arc $q(I_{b_n}[r_n,1])$ with the arc $A_n\subseteq Z_n$ using a homeomorphism, say $h_{n}$, that maps $q(t)$ to $t_n$, $n\in\mathbb N$.

Consequently, for every $s\in \Lambda$ and $n\in\mathbb N$,
identify in $W$ the arc
$p_sq(I_{b_n}[r_n, 1])$ with the arc $A_{s\smallfrown n}\subseteq Z_{s\smallfrown n}$ by a homeomorphism, say $h_{s\smallfrown n}$, of these arcs that maps $t_s$ to $t_{s\smallfrown n}$.

Additionally, identify the arc $p_sq(I_1[1/2, 1])$ with the arc $A_{s\smallfrown 0}\subseteq Z_{s\smallfrown 0}$ by a homeomorphism, say $h_{s\smallfrown 0}$, of these arcs that maps $t_s$ to $t_{s\smallfrown 0}$.

Finally, let $X$ be the quotient of $W$ and denote by $u:W\to X$ the quotient map.

\subsubsection*{The family of arcs $\mathcal L$}

We describe the family of arcs $\mathcal L=\bigcup \{\mathcal L_s: s \in \Lambda \cup\{\emptyset\}\}$.
We will proceed in different ways for $s=\emptyset$ and $s\in\Lambda$.

\subsubsection*{The family $\mathcal L_\emptyset$ of arcs in $Y$}

Let $\mathcal L_\emptyset$ consist of all the arcs in $Y$ of the form $q(I_c)$, whenever $c\in C\setminus \{a_n, b_n: n\in \mathbb N\}$ (i.e. $f^{-1}(f(c))$ is a singleton.) 
We also add to $\mathcal L_\emptyset$ all the arcs 
$q(I_{a_n} \cup I_{b_n}[0,r_n])$, $n\in\mathbb N$.


\subsubsection*{The family $\mathcal L_s$ of arcs in $Z_s$}

Fix $s\in\Lambda$, $|s|=n$, $n\in\mathbb N$.
Let $\mathcal L_s$ consist of all the arcs in $Z_s$ of the form $p_sq(I_c)$, whenever $c\in C\setminus \{0,1, a_n, b_n: n\in\mathbb N\}$ (i.e. $f^{-1}f(c)$ is a singleton and $c\neq 0$ and $c\neq 1$). Add to this family $\mathcal L_s$ all the arcs of the form $p_sq(I_{a_n} \cup I_{b_n}[0, r_n])$.
Finally, add one more arc $p_sq(I_1[0, 1/2])$.
Note that $p_sq(I_1[0, 1])$ is a simple closed curve, not an arc! This is why this exception with $c=1$ is crucial.

\begin{figure}
\centering
\includegraphics[width=\linewidth]{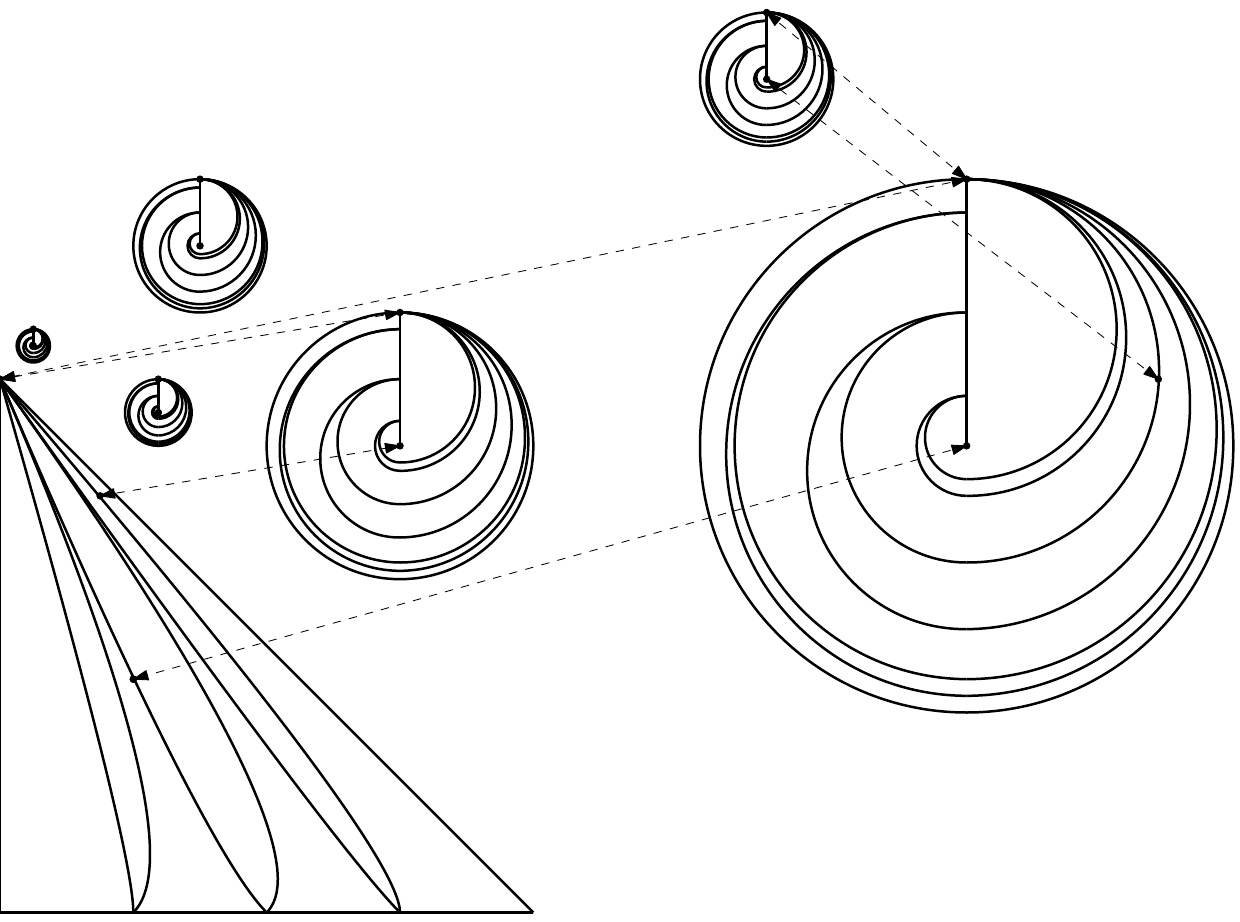} %
\caption{\label{FigW} The domain of the quotient map $u: W\to X$.}
\end{figure}

\subsubsection*{Verification that everything works}

The restrictions $u|Y$ as well as $u|Z_s$, $s\in\Lambda$, are injective. Hence, we can consider $Y$ and $Z_s$ as subspaces of $W$.
The same applies to every $L\in\mathcal L$. 
We just note that $Y\cap Z_n=A_n$ and $Z_s\cap Z_{s\smallfrown n}=A_{s^\smallfrown n}$.

The space $X$ is a quotient of a compact metric space $W$ by an upper semi-continuous decomposition, since $A_n$ converges to $\{t\}$. Thus, $X$ is a compact metrizable space.
Since moreover $t\in Y\cap\bigcap_{s\in\Lambda} Z_s$, the space $X$ is a continuum.

\subsubsection*{The space $X$ is covered by $\mathcal L$}
Let $x\in X$. We can suppose that $x\neq t$. Distinguish two cases and subcases. First, if $x\in Y$, then either a) $x\in\bigcup\mathcal L_\emptyset$ or b) $x\in A_n$, for some $n\in\mathbb N$, but $x$ is not an end-point of $A_n$. In the b) case, $x$ is contained in an arc from $\mathcal L_n$.
Second, if $x\in Z_s$ for some $s\in\Lambda$, then either a) $x\in\bigcup\mathcal L_s$ or b) $x\in A_{s\smallfrown n}$ for some $n\in\omega$, and $x$ is not an end-point of $A_{s\smallfrown n}$.
In the b) case, we also get that $x$ is contained in an arc from  $\mathcal L_{s^\smallfrown n}$.

\subsubsection*{Distinct arcs from $\mathcal L$ intersect in $t$ only}
We need to verify first that $K\cap L=\{t\}$ for distinct $K, L\in\mathcal L$.
Clearly, if we have distinct $K, L\in \mathcal L_s$, $s\in\Lambda$, then $K\cap L=\{t\}$. This simply holds also in the case of $s=\emptyset$.

Also $u(Z_s)\cap u(Z_{s^\prime})=\{t\}$ unless $s$ is a prolongation of $s^\prime$ by one element or vise versa.
Hence we may suppose that $K\in \mathcal L_s$ and $L\in \mathcal L_{s\smallfrown n}$ with $s\in\Lambda$ and $n\in\omega$. Since $u(Z_s)\cap u(Z_{s\smallfrown n})=u(B_{s\smallfrown n})$.
However, the only $K\in \mathcal L_s$ that intersects the arc $u(B_{s\smallfrown n})$ in a point other than $t$, intersects it in the opposite end-point. That end-point is, however, not used by any $L\in\mathcal L_{s\smallfrown n}$.

Similar argumentation also works in the last case when $K\in\mathcal L_\emptyset$ and $L\in\mathcal L_s$, $s\in\Lambda$.




\subsubsection*{The continuum $X$ has dimension 1}

The Cantor fan $F$ is well known to be 1-dimensional.
The continuum $Y$ can be expressed as a countable union of homeomorphic copies of the Cantor fan and the arc $B$. 
Hence, by Lemma \ref{countablesumtheorem}, $Y$ is 1-dimensional.
Similarly, the continuum $Z$ is a countable union of homeomorphic copies of $F$, and thus it is 1-dimensional as well. Finally, $X$ is a countable union of $u(Y)$ and $u(Z_s)$, $s\in\Lambda$, which are homeomorphic copies of $Y$ or $Z$. Hence, again by Lemma \ref{countablesumtheorem}, we get that $X$ is 1-dimensional.

\subsubsection*{The continuum $X$ contains simple closed curves}

The union $q(I_{a_n})\cup q(I_{b_n})$ is a simple closed curve for every $n\in\mathbb N$.

\subsubsection*{Generalizing the whole proof for arbitrary $T$ and $f$}

We can modify the whole construction as follows.
Keep the space $Z$ as it is, but modify the construction of $Y$ naturally by using a continuum $T$ for which there is a continuous surjective map $f:C\to T$ where $f^{-1}(t)$ is nondegenerate only for countably many points $t\in T$, and in these cases $|f^{-1}(t)|=2$.
Note that the dimension of $T$ is 1 by \cite[Problem 1.7.F c)]{EngelkingDim}.
Then we can analogously define the space $Y=Y_f$ as a quotient of the Cantor fan $F$ by the identification of points $(a,0), (b,0)\in F$ whenever $f(a)=f(b)$. Proceed in the construction in the same vein by defining $W$ and $X$ using $Y_f$ and $Z$.
In this way, the construction yields a broom that contains $T$.

\section{Concluding remarks and questions}

\begin{remark}\label{remarkmore}
By \cite{WinklerTriangle}, the Sierpiński triangle $T$ is a continuous image of the Cantor set under a continuous surjective map $f$, such that the preimages of points are of cardinality at most two, and only countably many points have a preimage of cardinality exactly two.
Thus, Theorem \ref{mainthm} can be applied to the Sierpiński triangle $T$ and a map $f$. See also \cite{VejnarTriangle} for other spaces of this kind. 
\end{remark}

\begin{remark}
By the classical Moore plane decomposition theorem, every upper semi-continuous decomposition of the plane, whose classes are continua that do not separate the plane, is homeomorphic to the plane.
It follows from the Moore theorem that both spaces $Y$ and $Z$ from Section \ref{mainsection} can be realized in the plane (for the case $T=[0,1]$). The argument uses a decomposition whose nontrivial classes are arcs (the dashed arcs in Figures \ref{FigY} and \ref{FigZ}), which clearly do not separate the plane.
Consequently, $W$ is embeddable into the plane as well.
However, the gluing towards the space $X$ suggests that the continuum $X$ cannot be embedded into the plane.
\end{remark}

\begin{question}
Is every planar broom a fan?
\end{question}

\begin{remark}
It is well known that every family of disjoint simple triods in the plane is countable.
Disjoint families of simple triods in every broom $X$ obtained by Theorem \ref{mainthm} are countable because every triod in $X$ that does not contain the point $t$ must intersect an arc $A_s$, $s\in\Lambda$, in a subarc.
However, we can easily construct a broom with an uncountable family of disjoint simple triods. Just start with any broom $X$ with the top point $t$ constructed in Theorem \ref{mainthm}. We know that there is a simple triod $S\subseteq X\setminus \{t\}$. Let $C$ denote the Cantor set. Then consider the quotient of $C\times X$ where $C\times\{t\}$ is pushed to a point. This quotient space is clearly a broom, and $\{\{c\}\times S: c\in C\}$ is an uncountable family of disjoint triods.
\end{remark}

\begin{remark}\label{differentArcs}
Let $X$ be any broom from Theorem \ref{mainthm} and 
let $\mathcal L, \mathcal L^\prime$ be two families of arcs witnessing that $X$ is a broom. Then $\mathcal L =\mathcal L^\prime$. The same is true for every fan $X$.
\end{remark}


\begin{question}
Does there exist a broom $X$ and distinct families $\mathcal L, \mathcal L^\prime$ of arcs in $X$, both wittnesing that $X$ is a broom?
\end{question}

\begin{remark}
A space is said to have the fixed point property, if every continuous selfmap has a fixed point. Note that the fixed point property is preserved by retracts.
By \cite[Theorem 1, p.354]{Kuratowski} (see also \cite[Proposition 2.4]{IllanesKrupski}),
every simple closed curve contained in a 1-dimensional continuum $X$ is a retract of $X$.
Since all the brooms constructed in Theorem \ref{mainthm} contain simple closed curves, none of them has the fixed point property. This is in contrast to the fact that dendroids have the fixed point property.
\end{remark}

\begin{question}
Does there exist a broom with the fixed point property, that is not a fan?
\end{question}

In \cite{phans}, a continuum $X$ is called a \textbf{Carolyn pan} if there is a point $t\in X$ and a family $\mathcal L$ consisting of arcs in $X$, such that 
\begin{itemize}[noitemsep]
    \item $K\cap L=\{t\}$ for distinct arcs $K, L\in\mathcal L$,
    \item $X=\bigcup \mathcal L$,
    \item for every continuum $C\subseteq X\setminus \{t\}$ there is $L\in\mathcal L$, such that $C\subseteq L$.
\end{itemize}

We give a negative answer to \cite[Problem 4.17]{phans} by the following theorem.

\begin{theorem} \label{theoremanotherproblem}
The space $X$ obtained by Theorem \ref{mainthm} is not a Carolyn pan.
\end{theorem}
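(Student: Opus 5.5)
We need to show that no point $t'\in X$ together with a family $\mathcal L'$ of arcs makes $X$ a Carolyn pan. Our plan is to exhibit, for every candidate top point, a subcontinuum of $X\setminus\{t'\}$ that is not contained in any arc. The natural candidates are the simple closed curves $q(I_{a_n})\cup q(I_{b_n})$ in $Y\subseteq X$ (or their analogues $p_sq(I_{a_n}\cup I_{b_n})$ inside the copies $Z_s$), since a simple closed curve is never contained in an arc.

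\textbf{Step 1.} Suppose $X$ is a Carolyn pan witnessed by $t'$ and $\mathcal L'$. Observe that every simple closed curve $S\subseteq X$ must contain $t'$. Otherwise $S$ is a continuum in $X\setminus\{t'\}$, so by the definition it lies in a single arc $L\in\mathcal L'$. This is impossible, because arcs contain no simple closed curves.

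\textbf{Step 2.} Find two simple closed curves in $X$ whose intersection is a set not containing any admissible $t'$.

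\begin{itemize}
\item Take $S_n=q(I_{a_n})\cup q(I_{b_n})$ for $n\in\mathbb N$. Each $S_n$ passes through $t$, and $S_n\cap S_m=\{t\}$ for $n\neq m$, because the fibres $I_c$ in the Cantor fan meet only at the top, and the base identifications glue only $(a_n,0)$ with $(b_n,0)$. Hence Step 1 forces $t'=t$.
\item Now produce a simple closed curve avoiding $t$. In $Z_s$, the arc $p_sq(I_1)$ is a simple closed curve through $t_s$, which is identified with $t$ in $X$; so this curve is not enough.
\item Instead use a curve inside $Y$ built from the base arc $B$. Pick a dyadic $d_n$ with $a_n<b_n$. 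The path $q(I_{a_n}[0,\varepsilon])$, then $q(\{c\}\times\{\varepsilon\})$ for $c$ ranging over $C\cap[a_n,b_n]$, then $q(I_{b_n}[0,\varepsilon])$ does not work directly, since $C\times\{\varepsilon\}$ is totally disconnected.
\item Better: take the subarc of $B$ between $q(a_n,0)$ and $q(b_n,0)$. This is $B$ itself, because $f(a_n)=f(b_n)$ is a single point; the identification turns this into a degenerate set. So simple closed curves in $Y$ avoiding $t$ may not exist.
\end{itemize}

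\textbf{Step 3.} Switch strategy and use a non-arc continuum avoiding $t$ that is not a circle. The arc $B\subseteq Y$ (equivalently $A_n$, identified with $q(I_{b_n}[r_n,1])$ in $X$) meets the arcs $q(I_c)$ for uncountably many $c$. Take the continuum
\[
K=B\cup q(I_{c_1}[0,\tfrac12])\cup q(I_{c_2}[0,\tfrac12])\cup q(I_{c_3}[0,\tfrac12]),
\]
where $c_1,c_2,c_3\in C\setminus\{a_n,b_n:n\in\mathbb N\}$ have $f(c_i)$ in the interior of $B$. Then $K$ contains a simple triod (more precisely, a continuum with branch points off $B$'s endpoints), so it is not contained in any arc. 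Since $t\notin K$, the only remaining issue is the choice of $t'$.

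\textbf{Step 4.} Combine the two observations. Step 2 gives $t'=t$. Then $K\subseteq X\setminus\{t'\}$ is a continuum not contained in any arc, which contradicts the Carolyn pan condition.

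\textbf{Expected obstacle.} The main care needed is verifying in $X$, not just in $Y$, that the curves $S_n$ meet only at $t$ and that $K$ avoids $t$. This requires that the gluings $u$ do not create extra identifications on $q(I_{a_n})$, $q(I_{b_n})$ and $B$. That holds because the gluings only touch the arcs $q(I_{b_n}[r_n,1])$ and $B$, and $u|Y$ is injective.
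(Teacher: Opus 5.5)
Your argument is correct once the exploratory material is removed (the second through fourth bullets of Step 2 are abandoned attempts and should be deleted), but it takes a different route from the paper.

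You use only the third clause of the definition of a Carolyn pan. Step 1, applied to the two circles $S_n, S_m$ that meet only in $t$, forces $t'=t$; this intersection property survives in $X$ because $u|Y$ is injective. Then $B\cup q(I_c[0,\frac12])$ with $f(c)\in(0,1)$ is a subcontinuum of $X\setminus\{t\}$ containing a simple triod with top $q(c,0)$, so it lies in no arc. One hair suffices; the three hairs and the condition $c_i\notin\{a_n,b_n\}$ are unnecessary.

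The paper's self-contained proof needs only one simple closed curve $S$, but it uses the clause $K\cap L=\{t\}$. If $t\notin S$ one is done at once. Otherwise $S\setminus\{t\}$ is exhausted by arcs $C_1\subseteq C_2\subseteq\cdots$. The members of $\mathcal L$ containing the $C_n$ all contain the nondegenerate arc $C_1$, so they coincide with a single $L$, and then $S\subseteq L$, which is absurd.

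So the paper shows that no continuum containing a simple closed curve is a Carolyn pan, without reference to how $X$ was built. Your argument relies on the specific geometry of $X$ (two circles meeting exactly in $t$, and a triod away from $t$). In exchange it gives a stronger conclusion: for no point $t'\in X$ are all subcontinua of $X\setminus\{t'\}$ contained in arcs, regardless of the other two clauses.

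Some corrections to the write-up:
\begin{itemize}
\item In Step 3 the parenthetical ``equivalently $A_n$'' is wrong, as is the phrase ``the gluings only touch \dots\ $B$'' in your last paragraph. $A_n=p_n(B)$ is the copy of $B$ inside $Z_n$, and it contains $t_n$, which becomes $t$ in $X$. Using it in $K$ would put $t$ into $K$ and destroy the argument. The arc $B\subseteq Y$ you actually use is untouched by every gluing, and injectivity of $u|Y$ is all you need.
\item The theorem concerns the $X$ built from an arbitrary $T$, where $B\cong T$. Your argument adapts directly. If $T$ is not an arc, $B$ alone is a subcontinuum of $X\setminus\{t\}$ lying in no arc; otherwise the triod works.
\item Two distinct doubled points $d_n\neq d_m$ always exist, because identifying only finitely many pairs of points of $C$ yields a disconnected space.
\end{itemize}
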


\begin{proof}
Note that $X$ contains simple closed curves, so it is not a fan.
Suppose for contradiction that $X$ is a Carolyn pan.
Note that $X$ is 1-dimensional.
Since every 1-dimensional Carolyn pan is a fan by \cite[Theorem 4.15]{phans}, we get a contradiction.

An alternative self-contained proof of the theorem is as follows.
Suppose for contradiction that $X$ is a Carolyn pan. Hence, there is $t\in X$ and a family $\mathcal L$ as in the definition. 
Notice that $X$ contains a simple closed curve $S\subseteq X$ with $t\in S$. 
Then $S\setminus \{t\}$ contains arcs $C_n$ whose both end-points approach $t$ as $n\to\infty$, $t\notin C_n$, and $C_1\subseteq C_2\subseteq \dots$.
Choose $L\in\mathcal L$ with $C_1\subseteq L$. Clearly, for sufficiently large $n\in\mathbb N$, $C_n$ is not a subset of $L$. This is a contradiction with $X$ being a Carolyn pan.
\end{proof}

\section*{Acknowledgement}
I am grateful to Iztok Bani\v{c}, who encouraged me to solve the main problem of this paper during the 3rd Maribor Conference on
Topological Dynamics and Continuum Theory.
Also, special thanks belong to the authors of \cite{phans} who verified the early proof of the main result of this paper during the 59th Spring Topology and Dynamics Conference in Birmingham.
The MathOverflow discussion of a 1-dimensional continuum that is the disjoint union of arcs \cite{Mathoverflow} was also helpful to me.
\bibliographystyle{abbrv}
\bibliography{sample}

\end{document}